\documentclass[a4paper,12pt]{amsart}
\pdfoutput=1
\usepackage[T1]{fontenc} 
\usepackage{mathtools}
\usepackage[all]{xy}
\usepackage[english]{babel}
\usepackage{frcursive} 
\usepackage[protrusion=true,expansion=true]{microtype}
\usepackage{eucal}
\usepackage{graphicx}
\usepackage{epsfig}
\usepackage{mathrsfs}
\usepackage{amssymb}
\usepackage{amsxtra}
\usepackage{enumerate}

\newtheorem{theorem}{Theorem}[section]
\newtheorem*{theo}{Theorem}
\newtheorem{proposition}[theorem]{Proposition}
\newtheorem{notation}[theorem]{Notation}
\newtheorem{lemma}[theorem]{Lemma}

\newtheorem{definition}[theorem]{Definition}

\theoremstyle{remark}

\newtheorem{example}[theorem]{\bf Example}

\def \1{\mathbb {1}}
\def \NM{\mathbb{N}}
\def \ZM{\mathbb{Z}}
\def \CM{\mathbb{C}}

\def \QM{\mathbb{Q}}

\def \Der {{\rm Der\,}}

\def \Ham {{\rm Ham\,}}
\def \Aut {{\rm Aut\,}}

\def \p {{\rm exp\,}}

\def \d{\partial}
 
\def\a{\alpha}

\def\p{\varphi}

\def \s{\sigma}

\def \to{\longrightarrow} 
\def \w{\wedge}

\def\del{\nabla}
\def \< {{\langle }}
\def \> {{\rangle }}
\def \( {\left( }
\def \) {\right) }

\newcommand{\Mt}{{\mathcal M}}

\renewcommand{\mod}{{\rm  mod\,}}

\newcommand{\bu}{\bullet}
\newcommand{\bs}{\blacksquare}
\newcommand{\cro}{\times}

\title[The Hamiltonian normal form]{The Hamiltonian normal form}
\author{  Mauricio  Garay and Duco van Straten}
\subjclass[2020]{37J40, 70H08, 37J15}
\begin{document}
\maketitle
\begin{abstract}Normal forms given by formal power series are widely used in mathematical physics, although they are often divergent. The first terms of such expansions may provide a good approximation but how to improve the approximation, if the series is divergent? In this paper, we exhibit a globally defined normal form at a stationary point of a Hamiltonian motion. This {\em Hamiltonian normal form} has poles along resonance loci. We show that the Birkhoff normal form arises from it as an asymptotic expansion, thereby confirming Poincaré’s intuition regarding the origin of divergent asymptotic expansions in perturbation theory.
\end{abstract}
\section*{ Introduction}
 In his {\em M\'ethodes mathématiques de la mécanique céleste},   Poincar\'e suggested that perturbative expansions which arise in celestial mechanics
 might be asymptotic expansions of rational functions~\cite[§ 119]{Poincare_Methodes}. He considered the series:
$$s=1+\frac{x}{1+y}+\frac{x^2}{1+2y}+\ldots=\sum_{k=0}^{\infty} \frac{x^k}{1+k y}.$$
 
Assume now that we want to solve the equation
\[0=f(x,y):=-y+\frac{x}{1+y}+\frac{x^2}{1+2 y}+ \frac{x^3}{1+3 y}+\ldots .\]
We may then write $y$ as a series in the variable $x$:
\[ b(x)=x+x^4-2x^5+2x^6+6x^7-35x^8+86x^9+\ldots\]
which we call the {\em Birkhoff expansion} of the {\em generating series} $f$. Both formal series have truncations:
 \begin{align*}
  b_n(x)=x+x^4-2x^5+2x^6+\dots+a_n x^n,\\ f_n(x,y):=-y+\frac{x}{1+y}+\dots+ \frac{x^n}{1+n y} \end{align*}
But these have very different nature. When we compare the graph of the polynomial approximation $b_n$ with the zero locus of the  rational approximations of $f_n$, we see that these have little in common. Both curves are tangent up to order five at the origin but
due to the presence of the poles, the rational approximation defines a curve that intersects the $y$-axis along the poles. The curve defined by $f_n$  exhibits numerous bendings, a phenomenon that does, of course, not occur with the graphs of the polynomial approximations $b_n$:
\ \\
\begin{figure}[htb!]
\includegraphics[width=0.35\linewidth]{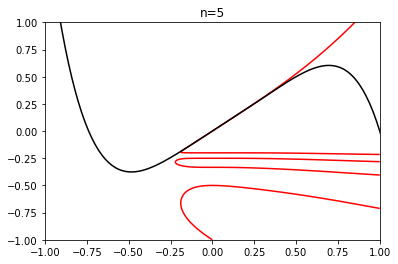}\\
{\em \tiny Comparing the generating series and the Birkhoff series.\\ The graphs of $b_5$ (in black) and the zero locus of $f_5$   (in red).}

 \end{figure}

The series $f$ converges pointwise in the complement of the poles, so it defines a limit set.
 Now, imagine we want to compute the points of this limit set using the polynomial truncations of the $b$-series. This will lead inevitably to values that are far from the actual ones.  
 For instance at $x=1/2$, by truncating the Birkhoff polynomials, we get approximated values: 
 \begin{align*} 0.5,\  0.5&,\ 0.5,\ 0.5625,\ 0.5,\ 0.53125,\ 0.57812,\ 0.441406,\ 0.60937 , 0.581054, \ 0.208,\\ 
\ 1.16064& ,\ -0.00610 ,\ -0.34082, 5.34094,\  -10.59795,
 \ 13.25323,
 \ 18.29940,
 -146.53388\ldots
  \end{align*}
 It would be unreasonable to think that these approximate in any way the points of our curve. At most one can hope that one of the first values gives some reasonable approximation, depending on what reasonable means. In this particular case, we will see that the best approximation at $x=1/2$ is given by $b_6(1/2) \approx 0.53125$. Computing more coefficients of the Birkhoff series is therefore, in this case, irrelevant, but it is not even obvious to guess that $b_6$  is the best truncation a priori. 
 
 By using the Birkhoff series, we end up with at least three problems,  we do not know:
 \begin{enumerate}[{\rm 1)}]
 \item where to stop the computations,
 \item how accurate our result is (unless we run
 into tedious estimates).
 \item how to improve the approximated value.
  \end{enumerate}
  These problems disappear, if we use directly the approximations $f_n$.  
We just substitute the value $x=1/2$ into $f_n$, then reduce to a common denominator. The numerator is then given by polynomials $p_n$:
 \begin{align*}
 p_2&=-2y^2-2y+1,\ p_3=-8y^3 - 12y^2 +y + 3,\ p_4=48y^4 - 88y^3 - 16y^2 + 23y + 7,\\
 p_5&= -384y^5 - 800y^4 - 298y^3 + 163y^2 + 108y + 15.
 \end{align*}
We are now searching for the roots of these polynomials lying near the point $1/2$.
 Let us depict the graphs of the polynomials for $i =2,3,\dots, 6$. We observe that they all pass approximatively through the same root although the convergence is apparently not uniform.\\
\begin{figure}[htb!]
\includegraphics[width=0.35\linewidth]{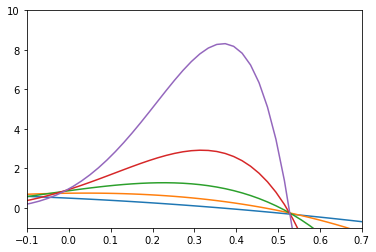}\\
{\em \tiny Approximating a point of the curve $f_n=0$ for $x=1/2$ and $n \leq 5$.}
 \end{figure}
  We solve numerically $p_n=0$  using the Newton method and get a value $y_n$. Then we improve the approximate solution $y_n$ with the next truncation $p_{n+1}=0$ by applying the Newton method with initial value $y_n$. In this way, starting at $y_1=1/2$, we get approximated values which converge:
   \begin{align*}
   0.36602,\  0.46926,\ 0.50593,\ 0.52043,\ 0.52650,\  0.52912,\ 0.53028,\ 0.53080,\ 0.53104,\\
  \ 0.53115, \ 0.53120,\  0.53122
,\ 0.53123
,\ 0.53124
,\ 0.53124
,\ 0.53124
,\ 0.53124
,\ 0.53124.\ 
\end{align*}
  
 This second method is clearly superior to the first one. Nevertheless when performing computations with the Birkhoff normal form in dimension $>1$, it is the former method that is employed. What, then, is the analog of the second computation in the context of a Hamiltonian system? This is the question we aim to answer.
   \begin{theo} Let $H=\sum_{i=1}^d \alpha_i p_i q_i+\dots $  be a non resonant Hamiltonian function, there exists an expansion
$F \in \CM(\omega)[[\tau]],\ \tau_i=p_iq_i$ with poles along the resonance hyperplanes such that:
\begin{enumerate}[{\rm 1)}]
\item the Birkhoff normal form is the Birkhoff series of $F$.
\item there is a Poisson automorphism of $\CM(\omega)[[\tau,q,p]]$ mapping $H$ to its normal form.
\end{enumerate}
\end{theo}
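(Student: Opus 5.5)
The idea is to treat the frequencies as free parameters. I would replace the fixed vector $\alpha$ by an indeterminate $\omega=(\omega_1,\dots,\omega_d)$ and consider the family $H_\omega=\sum_i \omega_i p_iq_i + H_{\geq 3}$. Here the higher-order part $H_{\geq 3}$ is independent of $\omega$, or depends polynomially on it. We then work over the field $\CM(\omega)$ of rational functions. Over this field the family is generically non-resonant: for every $k\in\ZM^d\setminus\{0\}$, the linear form $\langle k,\omega\rangle$ is a nonzero, hence invertible, element of $\CM(\omega)$.

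The construction is the classical Lie-series (Deprit) normalisation, carried out over $\CM(\omega)$. Write $H_2=\sum\omega_i\tau_i$ and let $L=\{H_2,\cdot\}$. On a monomial $q^ap^b$, $L$ acts by multiplication by $\pm\langle b-a,\omega\rangle$. Hence $L$ is diagonal on $\CM(\omega)[[q,p]]$; its kernel is $\CM(\omega)[[\tau]]$, and it is invertible on the span of the non-resonant monomials, where $a\neq b$. Proceeding degree by degree, suppose $H$ has been normalised up to degree $n-1$. I split the degree-$n$ part as $N_n+R_n$, with $N_n\in\CM(\omega)[\tau]$ and $R_n$ in the image of $L$. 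I then solve $L(G_n)=R_n$, so that $G_n$ has coefficients with denominators products of the forms $\langle k,\omega\rangle$, and apply $\exp(\{G_n,\cdot\})$. The composite of these infinitely many exponentials converges in the $(q,p)$-adic topology, because $G_n$ has degree $n\geq 3$. Each step raises the order, and each coefficient of the limit involves only finitely many operations in $\CM(\omega)$. The result is a Poisson automorphism $\Phi$ of $\CM(\omega)[[\tau,q,p]]$ sending $H_\omega$ to $F\in\CM(\omega)[[\tau]]$. This gives 2). The poles of the coefficients of $F$ lie on the hyperplanes $\langle k,\omega\rangle=0$ with $k\neq 0$, that is, on the resonance hyperplanes, as claimed.

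For 1), I would specialise $\omega\mapsto\alpha$. Since $\alpha$ is non-resonant, no $\langle k,\alpha\rangle$ vanishes, so every coefficient of $F$ and of $\Phi$ is regular at $\alpha$. Evaluation at $\alpha$ is a ring homomorphism from the local ring $\CM[\omega]_{(\omega-\alpha)}$ to $\CM$, and it commutes with Poisson brackets in $(q,p)$. Specialising the whole construction therefore gives the classical Birkhoff normalisation of $H$. By uniqueness of the Birkhoff normal form in the non-resonant case (normal forms in $\CM[[\tau]]$ are unique), it equals $F(\alpha,\tau)$.

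To make the statement "the Birkhoff normal form is the Birkhoff series of $F$" match the introduction's example, I would also expand $F(\alpha+\varepsilon,\tau)$ around $\alpha$. The Birkhoff series, in the sense of the introduction, comes from solving for the critical or level data order by order, i.e. eliminating the $\omega$-dependence in $\tau$. I would check that this expansion, done in the sense of the toy example $f(x,y)=0\mapsto y=b(x)$, reproduces the formal normal form.

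The main obstacle is interpreting and verifying 1) precisely: deciding what ``Birkhoff series of $F$'' means when $F$ depends on $\omega$ and on $\tau$, and showing compatibility. I would first try to define it by imposing the constraint $\omega=\alpha+\partial_\tau(\text{higher terms})$, analogous to $y=x/(1+y)+\dots$. I would then prove order by order, using the uniqueness of normal forms, that solving this constraint formally yields the classical Birkhoff series. The convergence and pole bookkeeping for 2) is routine by comparison.
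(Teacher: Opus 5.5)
Your Lie-series normalisation is $\CM(\omega)$-linear and uses only Hamiltonian derivations. What it constructs is therefore the Birkhoff normal form of the family with the frequencies as parameters. The automorphism fixes $\omega$, and specialising at $\omega=\alpha$ returns $B_H$ by plain evaluation. That part is correct, but it is not statement 1). In the paper's terminology, the Birkhoff series of a generating function is the formal solution of an implicit equation, as $y=b(x)$ solves $f(x,y)=0$ in the introduction. Your $F$ comes with no such equation---the only constraint on $\omega$ is $\omega=\alpha$---so 1) collapses to evaluation. Your last paragraph (impose some constraint $\omega=\alpha+\partial_\tau(\cdots)$ and check order by order) is exactly where the proof would have to be, and nothing is supplied there.

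Your normal form in 2) is not the intended one either. For $H=pq+p^3+q^3$, writing your frequency as $1+\omega$, your $F$ begins $(1+\omega)pq-\frac{3}{1+\omega}(pq)^2$. In the Moser extension its coefficient of $f=pq-\tau$ is $(1+\omega)-\frac{6\tau}{1+\omega}$, so it is not of the form $A_0+T$ with $T\in M$. Removing the discrepancy requires precisely the term $-\frac{6\tau}{1+\omega}\partial_\omega$ of the paper's $v_1$. Note also that your $F$ lies in $\CM(\omega)[[pq]]$, not in the Poisson centre $\CM(\omega)[[\tau]]$: the independent Moser variables never enter your argument.

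The missing idea is the non-Hamiltonian half of $\Theta(R)=\Ham(R)\oplus\Der(R_0)$. Modulo $R_0+I^2$, a diagonal term satisfies $g(pq)\equiv\sum_i\partial_{\tau_i}g(\tau)\,p_iq_i=v(A_0)$ with $v=\sum_i\partial_{\tau_i}g\,\partial_{\omega_i}$. So the paper absorbs the diagonal terms as well (the map $L$, corrected to $j_A$) and normalises only modulo the Moser algebra. This yields $\Phi(F_0)=A_0+T$ with $T\in M$, and now $\Phi$ moves $\omega$: $G_i=\Phi(\omega_i)$ is the generating function, with poles on $(\alpha+\omega,J)=0$. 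Solving $G(\omega,\tau)=0$ by the implicit function theorem gives the Birkhoff series $\omega(\tau)$. Substituting it and then $\tau=pq$ turns $\Phi$ into a symplectic automorphism of $\CM[[q,p]]$, and uniqueness of the Birkhoff normal form identifies the image of the Hamiltonian normal form with $B_H$.

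Your construction can be completed along these lines.
\begin{enumerate}[{\rm 1)}]
\item Shift $\omega$ so that your parametric normal form reads $B(\omega,\tau)=\sum_i(\alpha_i+\omega_i)\tau_i+\cdots$ with coefficients in $SD_\alpha$.
\item Put $\Omega(\omega,\tau):=\nabla_\tau B(\omega,\tau)-\alpha=\omega+O(\tau)$.
\item Compose your automorphism with the automorphism $\chi$ of $SD_\alpha[[\tau]]$ given by $\chi(\tau)=\tau$ and $\chi(\omega)=\Omega^{-1}(\omega,\tau)$, the inverse for fixed $\tau$. Extend $\chi$ to $R$ by fixing $q,p$; it is then Poisson.
\end{enumerate}
Since $B(\omega,\tau+f)\equiv B(\omega,\tau)+\sum_i\partial_{\tau_i}B\,f_i$ modulo $I^2\cap R_0[[f]]$, the composite sends $F_0$ into $A_0+M$. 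Its generating function is $G=\Omega^{-1}$, and $G=0$ is solved by $\omega(\tau)=\Omega(0,\tau)=\nabla B_H(\tau)-\alpha$. Finally, $\chi(B)$ evaluated at $\omega(\tau)$ equals $B(0,\tau)=B_H(\tau)$. This step is the whole content of 1), and the proposal does not contain it.
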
 
From the abstract point of view of versal deformations and group actions, this normal form that we call the {\em Hamiltonian normal form} is much more natural than the Birkhoff normal form~\cite{Functors}. It was originally introduced by the first author to prove the Herman invariant tori conjecture~\cite{Herman,Oberwolfach_2012,Herman_conjecture}.  We expect that, in numerical simulations, it might be useful as well and give more accurate approximations than the Birkhoff normal form, just like in the above example. Only time will tell if these expectations actually come true.
  
 \section{\bf The Birkhoff normal form}
\label{BirkhoffNormalForm}
We start by recalling the construction of the Birkhoff normal form from a perspective which will be used later on.
 
\subsection{The symplectic Poisson algebra}
We will be concerned with the structure of an analytic Hamiltonian system 
with $d$ degrees of freedom near a critical point of the form
\[H=\sum_{i=1}^d \alpha_i p_i q_i+\dots\]
where the dots denote higher order terms in the Taylor expansion.
We assume that the {\em frequency vector}:
\[\alpha:=(\alpha_1,\alpha_2,\dots,\alpha_d) \in \CM^d\]
is {\em non-resonant}, i.e., its components $\alpha_i$ are  $\QM$-linearly independent. We are interested in the question which terms appearing in $H$ 
may be transformed away using symplectic coordinate transformations. Rather than coordinate transformations, we found it more practical to deal with automorphisms which is, of course, an equivalent point of view.\\

So we consider the Hamiltonian $H$ as an element of the formal power 
series ring
$$P:=\CM[[q,p]]:=\CM[[q_1,\dots,q_d,p_1,\dots,p_d]].$$ 
The Poisson bracket of $f,g \in P$, defined by
$$\{ f,g \}=\sum_{i=1}^d \d_{q_i}f\d_{p_i}g-  \d_{p_i}f\d_{q_i}g,$$
makes $P$ into a Poisson algebra.
An element $h \in P$ is a power series that can be written as
\[h:=\sum_{a,b} C_{a,b} p^a q^b,\;\;\; C_{a,b}\in \CM, \]
where we use the usual multi-index notation, so that
\[p^aq^b:=p_1^{a_1}p_2^{a_2}\ldots p_d^{a_d} q_1^{b_1}q_2^{b_2}\ldots q_d^{b_d},\] 
and so on. We assign weight $1$ to each of the variables, so 
that the monomial $p^aq^b$ has weight $|a|+|b|$.
We write $h=O(k)$ 
if $h$ only contains monomials of degree $\ge k$, and say that $h$ {\em has
order $k$}.  If $h$ is analytic, it is represented by a convergent series, 
and our usage of the $O$ corresponds to its usual meaning. Algebraically,
the filtration by order is the filtration of $P$ by the powers of the
maximal ideal $\mathcal{M}$:
\[ P \supset \mathcal{M} \supset  \mathcal{M}^2 \supset \mathcal{M}^3 \supset \ldots \supset \{0\},\]
where
\[ \mathcal{M}^k:=\{h \in P\;|\; h = O(k) \}.\]

In a similar way, we can truncate a vector field by truncating its coefficients, but taking the shift of 
grading by $1$ into account (due to the fact the derivative decreases the degree by one). Thus a vector field of order $d$ maps the space of power series of order $k$ to power series of order $d+k$.
 
\begin{notation} If $h$ belongs $P$ or a filtered $P$-module, we denote by
$$\left[ h\right]_i^j $$
for the sum of terms of $h$ of weight (=degree) $\geq i$ and $<j$, 
so that $\left[h\right]_i^{i+1}$
represents the part of $h$ of pure weight $i$. When $j=+\infty$ we omit the letter $j$, when $i=0$ we omit the letter $i$. 
\end{notation}

\begin{definition}
A derivation $v \in Der(P)$ that preserves the Poisson bracket: 
\[ v(\{f,g\})=\{v(f),g\}+\{f,v(g)\}\]
is called a {\em Poisson-derivation} and we denote by $\Theta(P)$ the vector 
space of all Poisson-derivations or Poisson vector fields.
\end{definition}
The map 
\[  P \longrightarrow \Theta(P),\;\;\; h \mapsto \{-,h\}\]
associates to $h$ the corresponding Poisson-derivation, usually called the
{\em Hamiltonian vector field} of $h$. If $h=O(k)$ and $f=O(l)$, then clearly 
$\{f,h\}=O(k+l-2)$, so the vector field $v:=\{-,h\}$ is said to be 
{\em of order $k-2$}, although the coefficients of the vector field $v$ are $O(k-1)$. 
The following is immediate:\\
\begin{lemma} If $h=O(3)$, then one can {\em exponentiate} $v$ and obtain a Poisson 
automorphism of the ring $P$:
\[e^v=Id+\{-,h\}+\frac{1}{2!}\{\{-,h\},h\}+\ldots \in Aut(P).\]
\end{lemma}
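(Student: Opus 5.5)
The plan is to show that $e^v$ is a well-defined endomorphism of $P$, that it preserves the Poisson bracket, and that it is invertible.

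\emph{Convergence.} Since $h=O(3)$, for any $f=O(l)$ we have $\{f,h\}=O(l+1)$. By induction, $v^n(f):=\{\ldots\{f,h\},\ldots,h\}$ is $O(l+n)$. So for each fixed weight $j$, only the finitely many terms with $n\le j$ contribute to $\left[e^v f\right]^{j+1}_j$. The series $\sum_n v^n(f)/n!$ therefore converges in the $\mathcal{M}$-adic topology, and $e^v$ is a well-defined $\CM$-linear map $P\to P$ that is continuous for the filtration.

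\emph{Multiplicativity.} The vector field $v$ is a derivation of the product, so the Leibniz rule gives $v^n(fg)=\sum_{k}\binom{n}{k}v^k(f)v^{n-k}(g)$. Dividing by $n!$ and summing, the usual Cauchy product identity yields $e^v(fg)=e^v(f)\,e^v(g)$. All rearrangements are legitimate because, in each fixed weight, only finitely many terms are involved.

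\emph{Compatibility with the bracket.} By the Jacobi identity, $v$ is a Poisson derivation. The same binomial argument, with the bracket replacing the product, gives $v^n\{f,g\}=\sum_k\binom{n}{k}\{v^kf,v^{n-k}g\}$, hence $e^v\{f,g\}=\{e^vf,e^vg\}$.

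\emph{Invertibility.} The field $-v=\{-,-h\}$ satisfies the same hypothesis. Since $v$ and $-v$ commute, the identity $e^{v}e^{-v}=Id$ follows from the formal identity of exponential series, which holds degreewise. Thus $e^v\in Aut(P)$ is a Poisson automorphism.

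The only point needing care is the order estimate $v^n(f)=O(l+n)$, which rests on $h=O(3)$. Everything else is formal algebra carried out weight by weight.
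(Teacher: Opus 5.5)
Your argument is correct and is the routine verification the paper leaves out; the paper gives no proof and simply calls the lemma immediate. The estimate $\{O(l),h\}\subset O(l+1)$ gives $\mathcal{M}$-adic convergence, and the binomial Leibniz and Jacobi identities together with $e^{v}e^{-v}=Id$ give a Poisson automorphism. Expanding $\{e^vf,e^vg\}$ termwise also uses continuity of the bracket, which follows from $\{O(a),O(b)\}\subset O(a+b-2)$ and is the same weight-by-weight reasoning you already use.
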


If $v$ happens to be analytic then it defines a vector field in a neighbourhood of 
the origin and our derivation $v$ is simply the Lie derivative along this vector field. 
The formal power series $e^v$ is in that case an analytic automorphism and thus defines 
an associated analytic change of variables, the time $=1$ flow of the vector field. 

\subsection{The Birkhoff normal form}
If we let
\[h_0:=\sum_{i=1}^d \alpha_i p_iq_i ,\]
then
\[ \{h_0,p^aq^b\}=(\alpha,a-b) p^aq^b, \]
where $(-,-)$ denotes the standard euclidean scalar product. 
So if $\alpha$ is non-resonant, then each monomial $p^aq^b$ with $a \neq b$
appearing in $H=h_0+O(3)$ can be removed by an application of the
derivation
\[ v_{a,b}=\{-,\frac{1}{(\alpha,a-b)}p^aq^b\} .\]

As the application of $e^{-v_{a,b}}$ to $H$ will remove the term $p^aq^b$ from $H$,
we see that one can construct a sequence of automorphisms 
\[ \varphi_0:=e^{-v_0},\;\;\;\varphi_1:=e^{-v_1},\;\;\;\varphi_2:=e^{-v_2},\ldots \in Aut(P),\]
that remove successively all monomials $p^aq^b$, $a \neq b$ from the Hamiltonian $H$.

To write this iteration more explicitly, let us introduce some notation.
We consider the $\CM$-linear map
$$j:P \to \Theta(P),\ p^aq^b \mapsto \left\{ \begin{matrix} {\displaystyle \{-,\frac{1}{(\alpha,a-b)}p^aq^b\}}  &\text{ if } a \neq b \\
\ \\
0 &\text{ otherwise }\end{matrix} \right. $$
Then we define the iteration by putting $H_0=H$ and
\begin{align*}
v_k&=j([H_k]_{k+3}^{k+4})\\
H_{k+1}&=e^{-v_k}H_k
\end{align*}
so that the automorphism
\[\Phi_k :=\varphi_{k-1}\ldots\varphi_1 \varphi_0,\ \p_i=e^{-v_i}\]
maps $H$ to $H_k$. The infinite composition
\[\Phi:=\ldots \varphi_k \varphi_{k-1}\ldots \varphi_1 \varphi_0 \in Aut(P)\]
is a formal symplectic coordinate transformation that removes all monomials $p^aq^b$, $a \neq b$ from our Hamiltonian $H$, hence we see

\begin{theorem} For any non-resonant $H=h_0+O(3) \in P$ there exists an automorphism
 $\Phi \in Aut(P)$ such that
\[ \Phi(H)=B_H,\]
where $B_H$ is a series of the form
$$B_H:=\sum_{a \in \NM^d}C_ap^aq^a. $$
The series $B_H$ is called the {\em Birkhoff normal form} of $H$. 
\end{theorem}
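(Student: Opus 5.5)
The plan is to show that the iteration defined just before the statement converges in the $\mathcal{M}$-adic topology, and that its limit has the required form. Everything follows from three facts about the step $H_k \mapsto H_{k+1}=e^{-v_k}H_k$.

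\textbf{Step 1: the iteration is well defined.} Write $g_k:=[H_k]_{k+3}^{k+4}$. Then $v_k=j(g_k)=\{-,\tilde g_k\}$ with $\tilde g_k=O(k+3)$. Here $\tilde g_k$ is the homogeneous polynomial obtained from the non-diagonal monomials of $g_k$ by dividing each coefficient by $(\alpha,a-b)$. These divisors are nonzero because $\alpha$ is non-resonant and $a\neq b$. Since $\tilde g_k=O(3)$, the Lemma above shows that $e^{-v_k}$ is a well-defined Poisson automorphism of $P$.

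\textbf{Step 2: the induction.} The inductive claim is that $H_k=h_0+D_k+R_k$, where $D_k$ consists only of diagonal monomials $p^aq^a$ and $R_k=O(k+3)$. It holds for $k=0$. For the step, expand
\[
e^{-v_k}H_k=H_k-\{H_k,\tilde g_k\}+\tfrac{1}{2}\{\{H_k,\tilde g_k\},\tilde g_k\}-\dots
\]
Since $\tilde g_k$ has order $k+3$, each additional bracket raises the order by $k+1\geq 1$.
\begin{itemize}
\item In degree $k+3$, the only new contribution is $-\{h_0,\tilde g_k\}$, coming from the quadratic part $h_0$.
\item By the formula $\{h_0,p^aq^b\}=(\alpha,a-b)p^aq^b$ and the sign convention for the bracket, $-\{h_0,\tilde g_k\}$ equals exactly minus the non-diagonal part of $g_k$. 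I would fix the sign of $v_k$ so that this holds.
\item Every other new term has degree at least $k+4$.
\end{itemize}
Hence $[H_{k+1}]^{k+4}$ is diagonal, and $H_{k+1}$ agrees with $H_k$ up to degree $k+3$ apart from this cancellation. This proves the inductive step.

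\textbf{Step 3: convergence.} The key observation is that $\varphi_k=e^{-v_k}$ satisfies $\varphi_k(f)-f\in\mathcal{M}^{\,m+k+1}$ for $f\in\mathcal{M}^m$. Thus $\varphi_k$ is the identity modulo ever higher powers of $\mathcal{M}$. Consequently the compositions $\Phi_k=\varphi_{k-1}\cdots\varphi_0$ stabilize modulo $\mathcal{M}^N$ for every fixed $N$. More precisely, $\Phi_{k}$ and $\Phi_{k'}$ agree modulo $\mathcal{M}^{N}$ on the generators $q_i,p_i$ once $k,k'\geq N$.

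So $\Phi:=\lim\Phi_k$ exists as a continuous endomorphism of $P$. It is a Poisson automorphism, for three reasons:
\begin{itemize}
\item Its linear part is the identity, so it is invertible; the inverses $\Phi_k^{-1}$ converge in the same way.
\item Preservation of the Poisson bracket passes to the limit degree by degree.
\end{itemize}
Finally, $\Phi(H)\equiv H_k \bmod \mathcal{M}^{k+3}$, and the diagonal parts $D_k$ stabilize. Therefore $\Phi(H)=h_0+\lim D_k$, which is a sum of diagonal monomials $C_ap^aq^a$. This is $B_H$; note that $h_0$ itself is diagonal.

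The main point requiring care is Step 3: formulating convergence of an infinite composition in $\Aut(P)$ rigorously. This means checking both that the limit is a bracket-preserving automorphism and that it agrees with $\Phi_k$ on $H$ to increasing order. The homological cancellation in Step 2 is routine once the sign of $j$ is matched with the bracket convention.
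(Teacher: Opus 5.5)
Your proposal is correct and follows the paper's own argument. The paper proves the theorem by exactly this iteration, $v_k=j([H_k]_{k+3}^{k+4})$ and $H_{k+1}=e^{-v_k}H_k$, but only asserts that the infinite composition removes every non-diagonal monomial; your Steps 2 and 3 supply the induction and the $\mathcal{M}$-adic convergence that the paper leaves implicit. Note also that no sign adjustment is needed: with the paper's bracket and the convention $v(f)=\{f,h\}$, $v_k(h_0)=\{h_0,\tilde g_k\}$ is already exactly the off-diagonal part of $g_k$.
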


There exist several variants of this algorithm, differing in details. For example, one may remove several terms at the same time,  which may lead to different normalising transformations $\Phi$; however it is known that different choices lead to the same series $B_H$. 

\begin{example}
Take $d=1$ and consider the Hamiltonian function
$$H(q,p)=pq+p^3+q^3. $$
We determine a sequence of vector fields $v_0, v_1, v_2,\ldots$, where $v_k$ is obtained by removing simultaneously all terms of degree $k+3$. The iteration then begins with
\begin{align*}
H_0&=pq+p^3+q^3\\
v_0&=\{-, 1/3(p^3-q^3)\}\\
H_1&=pq-3p^2q^2+4p^4q+4pq^4+O(6)\\
v_1&=0\\
H_2&=pq - 3q^2p^2 + 4p^4q + 4q^4p - 3/2p^6 - 3/2q^6 - 12p^3q^3 +O(7)\\
v_2&=\{-, 4/3(p^4q-pq^4)\}\\
H_3&= pq- 3p^2q^2  -12p^3q^3 +O(7) \\
\dots
\end{align*}
From this we can read off the first three terms of the Birkhoff normal form, and continuing the process one finds
$$B_H(\tau)=\tau-3\tau^2-12\tau^3-105 \tau^4-1206\tau^5-16002\tau^6-232416\tau^7-3592377\tau^8+o(\tau^8) $$
where $\tau=pq$. (One can show that in this case the inverse power series 
to $B_H(s)$ is a hypergeometric function: $\tau= b \cdot \mbox{}_2F_1(1/3,2/3,1;27 b)$, $b:=B_H(\tau)$.)
\end{example}
\subsection{ The Moser Extension}
\label{SS::Moser}
As the monomials $p_iq_i$ ($i=1,2,\ldots,d$) Poisson commute with the Birkhoff normal form $B_H$, 
Birkhoff normalisation implies that any non-resonant Hamiltonian $H$ is {\em formally completely integrable}. To express this in a manifest way, it is useful to enlarge the ring $P$ and consider
\[Q:=\CM[[\tau,q,p]]=\CM[[\tau_1,\ldots,\tau_d,q_1,\ldots,q_d,p_1,\ldots,p_d]],\] 
with the extra $\tau$-variables, introduced by Moser~\cite{Moser1967}. With the same definition
of the Poisson bracket as before, $Q$  becomes a Poisson algebra with Poisson 
centre $Q_0:=\CM[[\tau]]$. We assign weight $=2$ to the variables $\tau_i$,
so that the $d$ elements
\[ f_i:=p_iq_i-\tau_i \in Q\]
are homogeneous of degree two. These elements Poisson commute, $\{f_i,f_j\}=0$,
and we obtain a Poisson commuting sub-algebra
\[ \CM[[\tau,f]]=\CM[[\tau,f_1,f_2,\ldots,f_d]]=\CM[[\tau,p_1q_1,\ldots,p_dq_d]]\]
containing $Q_0$. The $f_1,f_2,\ldots,f_d$ also generate an ideal\footnote{Here and in the sequel, the notation $\langle f_1,\dots,f_k\rangle$ stands for the ideal generated by elements $f_1,\dots,f_k$.}
\[I=\langle f_1,f_2,\ldots,f_d \rangle  \subset Q=\CM[[\tau,q,p]]\] 
and clearly, the canonical map
\[  \CM[[\tau,p,q]] {\to} \CM[[p,q]],\;\;\;p_i \mapsto p_i,\;q_i\mapsto q_i,\; \tau_i \mapsto p_iq_i .\]
induces an isomorphism of the factor ring $Q/I$ with our original ring $P$:
\[ Q / I \stackrel{\sim}{\to} P . \]
Although $f_i$ maps to zero under this map, the derivation $\{-,f_i\}$ induces
the non-zero derivation $\{-,p_iq_i\}$ on $P$, so the map $Q \to P$ is {\em not}
a Poisson-morphism. The ideal $I^2=\langle f_1,\dots,f_k \rangle^2 \subset Q$ is the square of the ideal $I$, i.e. generated by the elements $f_if_j$, $1 \le i,j\le d$, and plays a very distinguished role in dynamics. 

\begin{lemma} If $T\in I^2$, then $H$ and $H+T$ induce the same Hamiltonian vector 
field on $Q/I=P$. 
\end{lemma}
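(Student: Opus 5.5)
The plan is to compute the Hamiltonian vector field of $T$ directly and reduce modulo $I$. Since $\{-,\cdot\}$ is $\CM$-linear, it suffices to treat $T=\sum_{i,j} g_{ij} f_i f_j$ with $g_{ij}\in Q$. The ideal $I^2$ consists of finite sums of this kind, since it is generated by the finitely many products $f_if_j$.

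The first step is to apply the Leibniz rule for the Poisson bracket twice. For any $u\in Q$ this gives
\[
\{u, g_{ij}f_if_j\}=\{u,g_{ij}\}\,f_if_j+g_{ij}\,\{u,f_i\}\,f_j+g_{ij}\,f_i\,\{u,f_j\}.
\]
Every term on the right contains at least one factor $f_i$ or $f_j$, so it lies in $I$. Hence $\{u,T\}\in I$ for every $u\in Q$.

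Next we draw the conclusion for $Q/I=P$. The derivation $\{-,H\}$ on $Q$ induces a derivation on $Q/I$ only insofar as it is considered modulo $I$. The statement "$H$ and $H+T$ induce the same Hamiltonian vector field on $Q/I$" is understood as follows: for every $u\in Q$,
\[
\{u,H+T\}\equiv\{u,H\}\ \mod I .
\]
This is exactly the previous step.

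If one also wants these maps to be well defined on $Q/I$, so that they are genuine derivations of $P$, it is enough that $\{-,H\}$ preserve $I$. That holds, for instance, after normalisation. In the lemma, however, only the difference matters. The difference $\{-,T\}$ maps all of $Q$ into $I$, and in particular maps $I$ into $I$. So $\{-,T\}$ induces the zero derivation on $Q/I$, whatever $H$ is.

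There is no real obstacle here; the only care needed is in the interpretation. The contrast with $I$ itself is the point. For $T=f_i\in I$, the bracket $\{u,f_i\}$ need not lie in $I$: it induces $\{-,p_iq_i\}$, as noted before the lemma. The argument works for $I^2$ precisely because each term retains a factor of $f$ after one differentiation. Formal convergence is not an issue, since the Poisson bracket is continuous for the $\mathcal M$-adic topology.
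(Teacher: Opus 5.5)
Your proof is correct and is the paper's argument: the paper asserts $\{h,T\}\in I$ for $T\in I^2$ and concludes that $\{h,H\}$ and $\{h,H+T\}$ differ by an element of $I$, which maps to $0$ in $P$. Your Leibniz-rule computation on $T=\sum g_{ij}f_if_j$ supplies the justification of that first step, which the paper leaves implicit.
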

\begin{proof}
If $T \in I^2$, then $\{h,T\} \subset I$. As a consequence, the difference
between $\{h,H\}$ and $\{h,H+T\}$ belongs to $I$, which is mapped to $0$ in $P$.
\end{proof}

Extending the multi-index notation in an obvious way, we can write
\[ p^aq^a=(\tau+f)^a=\tau^a+\sum_{i=1}^d \partial_{\tau_i} \tau^a f_i+I^2 .\]
The term $\tau^a$ is in the centre of $Q$, whereas the above lemma
implies that  $p^aq^a$ and $\sum_{i=1}^d\d_{\tau_i}\tau^a f_i$ define the 
same derivation on the ring $P=Q/I$.

We can consider the Birkhoff normal form series $B(pq)=B_H(pq)$ as an element 
of $Q$. When we write $pq=\tau+f$, then we find:
\[ B(\tau+f)=B(\tau)+\sum_{i=1}^db_i(\tau)f_i \ \mod I^2 .\]
The formal power series $b_1,\dots,b_d \in \CM[[\tau]]$ are obtained as partial derivatives of $B$, 
considered as a series in the $\tau_i$-variables:
\[b=(b_1,\dots,b_d)=\del B(\tau) .\]

The first term $B(\tau)$ we also call the Birkhoff normal form, written in the
$\tau$-variables. It belongs to the Poisson centre $Q_0$ and is dynamically
trivial, but gets mapped to the non-trivial element $B_H \in P$. The second
term $\sum_{i=1}^db_i(\tau)f_i$ carries the dynamical information in $Q$, but is
mapped by the canonical map $Q \to P$ to zero. 

One has $b(0)=\alpha$, and the higher order terms describe how the frequencies change with $\tau$ and for 
this reason we call it the {\em formal frequency map}.  If the system happens to be integrable, then the series are convergent and the vector $b(\tau)=(b_1(\tau),b_2(\tau),\dots,b_d(\tau))$ is the frequency of motion on the corresponding manifold defined by
$f_i(\tau,q,p)=0$, $i=1,2,\ldots,d$.
\begin{example}
 Take $d=1$, the Hamiltonian
 $$H(q,p)=B(qp)=pq+(pq)^2 $$
 is already in Birkhoff normal form. In the Moser extension we have
 \begin{align*}
 H(q,p) &= \tau+\tau^2+(pq-\tau)+2\tau (pq-\tau)+(pq-\tau)^2\\
   &= B(\tau)+B'(\tau) f+f^2\\
 &=(1+2\tau) pq \ \mod I^2 \oplus \CM[[\tau]] .
 \end{align*}
\end{example}

\section{\bf  The Hamiltonian Normal Form}
\label{HamiltonianNormalForm}
\subsection{Introductory example}
 Consider again the anharmonic oscillator
$$H(q,p)=pq+p^3+q^3. $$
First, we detune the frequencies and consider the function:
$$F_0= (1+\omega)pq+p^3+q^3.$$
The idea is then to take back this function to 
$$A_0=(1+\omega)pq$$
via a Poisson automorphism.
The initialisation of our iteration is therefore
\begin{align*}
A_0&=(1+\omega)pq,\\
F_0&=A_0+p^3+q^3=(1+\omega)pq+p^3+q^3.
\end{align*}
Our first objective is to get rid of the cubic term. This is accomplished by observing that
$$p^3+q^3=\{ A_0,\frac{1}{3 (1+\omega)} (p^3-q^3)\}$$
So we choose
$$v_0=\{ -, \frac{1}{3(1+\omega)} (p^3-q^3)\}.$$
so that the automorphism $\p_0=e^{-v_0}$ transforms $F_0$ into
\begin{align*}
F_1(\tau,\omega,q,p)&=e^{-v_0}(A_0+p^3+q^3)  \\
  &=A_0-v_0(p^3+q^3)+\frac{1}{2!}v_0^2(F_0)+O(6)\\
&=(1+\omega)pq-\frac{3}{1+\omega}q^2p^2+\frac{6}{(1+\omega)^2}(p^4q+q^4p)+O(6).\\
\end{align*}
Now $H$ is recovered from $F_0$ by setting $\omega=0$. As the automorphism $\p_0$ sends the line $\omega=0$ to
itself, so $H=H_0$ is mapped to the restriction of $F_1$ to $\omega=0$:
$$H_1(q,p)= pq-3 q^2p^2+4(p^4q+q^4p)+O(6)$$
So in this way, we got rid of the cubic term in $H_0$. Note that at the first step $A_1=A_0$.
Notice that $F_1$ has poles at $\omega=-1$ which in this case is the only resonance.

Let us now proceed to the next order.
Now we look at the terms of degree 4 and 5. The degree 5 term can be eliminated 
by a Hamiltonian vector field:
$$\{ A_1, \frac{2}{(1+\omega)^3} (p^4q-q^4p)\}=\frac{6}{(1+\omega)^2}(p^4q+q^4p), $$
but something new happens: to suppress the  term $-\frac{3}{1+\omega}q^2p^2$ we need
a non-Hamiltonian Poisson vector field. This is done in two steps. First we note that
$$q^2p^2=(qp-\tau)^2+2\tau qp-\tau^2 .$$
The reason for rewriting the term in this way, is the fact that the terms in the space
$I^2\oplus \CM[[\omega,\tau]]$
{\em do not change the Hamiltonian derivation on the curve $qp=\tau$}.
So we choose 
$$v_1=\{ -, \frac{2}{(1+\omega)^3} (p^4q-q^4p)\}-\frac{6\tau}{1+\omega}\d_\omega $$ 
and get that
\begin{align*}v_1(A_1)&=\frac{6}{(1+\omega)^2}(pq^4+qp^4)-\frac{6\tau}{1+\omega} pq\\
  &=\frac{6}{(1+\omega)^2}(pq^4+qp^4)-\frac{3}{1+\omega} q^2p^2 \ \mod I^2 \oplus \CM[[\omega,\tau]] .
\end{align*}
The difference  $[F_1]_4^6-v_1(A_1)$ is seen to be
\[ \frac{ 6\tau}{1+\omega}pq -\frac{3}{1+\omega}p^2q^2=\frac{3\tau^2}{1+\omega}-\frac{3(pq-\tau)^2}{1+\omega} \in M \subset I^2+\CM[[\omega,\tau]]\]
so we get that
$$F_2=e^{-v_1}F_1= (1+\omega)\tau+\frac{3}{1+\omega}\tau^2+(1+\omega)f +\frac{-3}{1+\omega} f^2+O(6)$$
where $f:=pq-\tau$,.

So the transformation did not bring $F_1$ back to $A_1=A_0$ as there are, like in the Birkhoff normal form, terms which cannot be eliminated by the iterative process. But unlike the Birkhoff normal form, these
residual terms are irrelevant for studying the dynamics!
Now what happens to our function $H_1$? It is mapped to $H_2$, the restriction of $F_2$ to 
$$\p_1(\omega)=0,\ \p_1=e^{-v_1} $$
As $v_1$ contains a non-Hamiltonian term $-6\frac{\tau}{1+\omega}\partial_\omega$, the line $\omega=0$ is
not preserved and, more precisely, we have:
$$\p_1(\omega)=e^{\frac{6\tau}{(1+\omega)}\d_\omega}\omega=\omega+\frac{6\tau}{1+\omega}+O(4) .$$
We get a function of the form indicated by Poincar\'e. If we now compute the first terms of the Birkhoff series, that is, if we solve the truncated equation  
$$\omega+\frac{6\tau}{1+\omega}=0$$
 for $\omega$ we get that:
$$ \omega=-6\tau-36\tau^2+O(6)=-6\tau+O(4)$$
and substitute the result in $F_2$:
$$F_2(\tau,\omega(\tau),q,p)=\tau-3\tau^2+(1-6\tau)f-3f^2+O(6)$$
from which we recover again the first terms of the Birkhoff normal form.

In the next step, we define
\begin{align*}
A_2&=(1+\omega)\tau+\frac{3}{1+\omega}\tau^2+(1+\omega)f +\frac{-3}{1+\omega} f^2\\
   &=(1+\omega)pq +\frac{3}{1+\omega}\tau^2 \;\; \mod I^2 ,
\end{align*}
where $I=(f)$ so that 
\[ F_2=A_2 +O(6).\]
Then we have to look at the terms of degree
$6,7,8,9$ appearing in $F_2$ and determine a vector field $v_2$
\[v_2(A_2) =[F_2]_6^{10} +t , \;\;\; t \in I^2+\CM[[\omega,\tau]] .\]
To see these terms, we have to keep much more terms in the expansions. 
We find
\[A_{H,0}=A_{H,1} =(1+\omega)\tau, \;\;\;\]
\[A_{H,2}=(1+\omega)\tau+\frac{3\tau^2}{1+\omega},\]
\[A_{H,3}=(1+\omega)\tau+\frac{3\tau^2}{1+\omega}+\frac{6\tau^3}{(1+\omega)^3} - \frac{9\tau^4}{(1+\omega)^5}.\]
In this example the denominators appearing are rather simple; in examples
with more variables much more complicated denominators structures arise.
\subsection{The small denominator ring}
 As we saw in our example, we need to consider the Moser variables $\tau$ independently from the frequency variables, which means that we add variables $\omega_1,\dots,\omega_d$.
For a fixed frequency vector $\alpha \in \CM^d$, we define the ring $SD_{\alpha}$ of {\em small denominators} at $\alpha$ as the subring of the field $\CM(\omega)=\CM(\omega_1,\omega_2,\ldots,\omega_d)$ of rational functions, defined by 
localisation of $\CM[\omega]=\CM[\omega_1,\omega_2,\ldots,\omega_d]$ with respect with the multiplicative subset $S$ generated by all linear polynomials $(\a+\omega, J), J \in \ZM^n\setminus \{0\}$:
\[SD_{\alpha}:=\CM[\omega]_{S}:=\CM[\omega,\frac{1}{(\a+\omega,J)}, J \in \ZM^n\setminus\{0\}]
\subset \CM(\omega) .\]
The elements of this ring may have poles along the resonance hyperplanes
$$H_J=\{ \omega \in \CM^d: (\a+\omega,J)=0 \} $$
We now add the $q,p$ variables and define the Poisson algebra:\\
$$R:=SD_{\alpha}[[\tau,p,q]] \subset \CM[[\omega,\tau, p,q]],$$
 in $4d$ variables 
$$\omega_1,\omega_2,\ldots,\omega_d,\tau_1,\ldots,\tau_d,q_1,\ldots,q_d,p_1,\ldots,p_d .$$
The Poisson structure is defined as before by the formula:
$$\{ f,g \}=\sum_{i=1}^d \d_{q_i}f\d_{p_i}g-  \d_{p_i}f\d_{q_i}g.$$ In particular, the Poisson center of
$R$ is the ring
\[R_0:=SD_\a[[\tau]]\subset \CM[[\omega_1,\ldots,\omega_d,\tau_1,\ldots,\tau_d]].\]

Note that the variable $\omega$ has a more {global} character. The relevant filtration of $SD_\a$ is
given by the order of the poles along the resonance hyperplanes.
 
\subsection{The Moser algebra}
The following sub-algebra of $R$ is of importance for our discussion:

\begin{definition} We call the Poisson-commutative algebra
\[M:= R_0+ I^2 \cap R_0[[f]] \]
the {\em Moser-algebra} of $R$  where 
\[ I:=\langle f_1,f_2,\ldots,f_d \rangle  \subset R.\]
\end{definition}
Recall that the space $R_0+I^2$ corresponds to the terms which do not change the Hamiltonian motion. While in standard mechanics the energy is defined "up to a constant" it is in our context defined "up to an element of $R_0+I^2$".  The Moser algebra is a subspace of $R_0+I^2$ which will correspond to
the transversal of Poisson automorphisms acting on our Hamiltonian function.

As before, we denote the vector space of Poisson derivations of the $R$ by 
$\Theta(R)$, which has the structure of a module over the Poisson centre 
$R_0$. One has:

\begin{lemma} The Poisson derivations of $R$ decompose into Hamiltonian and non-exact parts:
$$\Theta(R)=\Ham(R) \oplus \Der_\CM(R_0)  .$$
\end{lemma}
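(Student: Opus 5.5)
Here $\Der_\CM(R_0)$ means the $\CM$-linear derivations of $R_0=SD_\a[[\tau]]$, extended to $R$ by letting them act coefficientwise and kill $q$ and $p$. Concretely these are the operators $\sum_i a_i\d_{\omega_i}+b_i\d_{\tau_i}$ with $a_i,b_i\in R_0$, plus whatever continuity conventions are imposed on $\Theta(R)$. Such an extension commutes with $\d_{q_i}$ and $\d_{p_i}$, so it preserves the bracket and lies in $\Theta(R)$. Hamiltonian derivations $\{-,h\}$ vanish on $R_0$, while a nonzero element of $\Der_\CM(R_0)$ does not. Hence the intersection of the two summands is zero, and the only real content is that the sum is everything.

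Given $v\in\Theta(R)$, I first show that $v$ maps the Poisson centre $R_0$ into itself. For $c\in R_0$ and any $g$, applying $v$ to $\{c,g\}=0$ gives
\[0=\{v(c),g\}+\{c,v(g)\}=\{v(c),g\},\]
so $v(c)$ is central, i.e. $v(c)\in R_0$. Thus $v|_{R_0}=:w\in\Der_\CM(R_0)$. Extending $w$ as above, the difference $u:=v-w$ is a Poisson derivation vanishing on $R_0$, hence $R_0$-linear.

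Next I show that an $R_0$-linear Poisson derivation $u$ of $R=R_0[[q,p]]$ is Hamiltonian. This is the symplectic Poincar\'e lemma over the base ring $R_0$. Define the $1$-form
\[\theta:=\sum_i u(q_i)\,dp_i-u(p_i)\,dq_i .\]
Applying $u$ to the relations $\{q_i,q_j\}=0$, $\{p_i,p_j\}=0$ and $\{q_i,p_j\}=\delta_{ij}$ gives exactly the closedness relations $\d_{p_j}u(q_i)=\d_{p_i}u(q_j)$, and so on; the signs must be fixed by comparing with $\{-,h\}$, for which $u(q_i)=\d_{p_i}h$ and $u(p_i)=-\d_{q_i}h$. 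Then I integrate termwise over the formal power series ring in $q,p$ with coefficients in $R_0$. Using the Euler-type homotopy operator, put
\[h=\sum_{k}\frac{1}{k}\,\bigl(\text{degree-}k\text{ part of }\iota_{E}\theta\bigr),\]
where $E$ is the Euler field in $q,p$; this only requires dividing by integers, which is allowed since $\QM\subset R_0$. This gives $h\in R$ with $u(q_i)=\{q_i,h\}$ and $u(p_i)=\{p_i,h\}$. The derivations $u$ and $\{-,h\}$ are both $R_0$-linear and agree on the generators $q,p$. Provided derivations are determined by their values on generators (continuity in the $(q,p)$-adic topology, which is automatic for the $\mathcal M$-adically continuous derivations considered), they coincide.

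The main obstacle I expect is this continuity and determination-by-generators point: one has to make precise that elements of $\Theta(R)$ are continuous in the $(q,p)$-adic topology, so that an $R_0$-linear derivation is fixed by its values on $q$ and $p$. One must also make sure that $\Der_\CM(R_0)$ is read as derivations of the form $\sum a_i\d_{\omega_i}+b_i\d_{\tau_i}$, with coefficients allowed to have poles along the $H_J$. I would state these conventions explicitly and then the argument above is routine.
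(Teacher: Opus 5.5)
Your proof is correct and is essentially the paper's: you split off the $\d_\omega,\d_\tau$ part, then show that the remaining $R_0$-linear symplectic part is Hamiltonian by the relative Poincar\'e lemma with the Euler homotopy $h=\sum_k\frac1k(\iota_E\theta)_k$, exactly as the paper does. The one variation is that you isolate the non-exact part intrinsically, deriving $v(R_0)\subset R_0$ from the Poisson centre, which makes explicit a point the paper leaves implicit in its coordinate splitting $X=Y+Z$ (that the $\d_{y_i}$-coefficients of a Poisson derivation lie in $R_0$); and the continuity you flag needs no convention, since $(q,p)^N$ is finitely generated, so every derivation maps $(q,p)^N$ into $(q,p)^{N-1}$ and is therefore determined by its values on $R_0$, $q$ and $p$.
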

\begin{proof}
We use the notation $x_1,\dots,x_{2d}$ for the variables $q_1,\dots,q_d,p_1,\dots,p_d$ and $y_1,\dots,y_{2d}$ for the variables $\tau_1,\dots,\tau_d,\omega_1,\dots,\omega_d$.

We have $\Der_\CM(R)=  \Der_{R_0}(R_0) \oplus \Der_\CM(R_0)$ which means that any derivation $X \in \Der_\CM(R)$
admits a decomposition
$$X=Y+Z,\ Y=\sum_{i=1}^{2d} a_i \d_{x_i}\  Z=\sum_{i=1}^{2d} b_i \d_{y_i} .$$
Denote by $\pi=\sum_i \d_{q_i} \w \d_{p_i}$ the Poisson bivector. That $X$ is a Poisson vector field means that
$$L_X \pi=L_Y(\pi)=0 $$
Therefore $Y$ preserves the two form $\omega=\sum_i dq_i \w dp_i$ and $Z$ is an arbitrary derivation. By Cartan's formula
$$L_X \omega=d(i_X \omega)=0 $$
where $d=d_{R/R_0}$ is the relative differential:
$$df= \sum_{i=1}^{2d} (\d_{x_i}f) dx_i.$$
Thus we need a relative Poincaré lemma in the ring $R$ in order to conclude that $i_Xd\omega$ is exact and hence $X$ is Hamiltonian. By definition
of $R$ we may decompose a one form $\a$ into homogeneous components with respect to $x$-variables
$$\a=\a_1+\a_2+\dots $$
Denoting by 
$$E=\sum_{i=1}^{2d}x_i \d_{x_i}$$
the Euler vector field. As $L_E \a_i=i\a_i $, by Cartan's formula if $\a$ is closed then
$$d \left( i_E \sum_{i=1}^{2d} \frac{\a_i}{i} \right)=L_E\left(\sum_{i=1}^{2d} \frac{\a_i}{i} \right)=\a $$
where $i_E$ denotes the interior product.
This proves the lemma.
\end{proof}
So an element of $\Theta(R)$ is of the form
\[ v=\{-, h \}+ w\] 
with 
\[ w=\sum_{i=1}^d a_i \frac{\partial}{\partial \omega_i} + b_i \frac{\partial}{\partial \tau_i}, \;\;\;a_i,\;b_i \in R_0.\]

\subsection{{The Hamiltonian normal form iteration}}

\begin{definition} The {\em $\omega$-extension of} $H \in P$ is the element
\[ F= H+\sum_{i=1}^d \omega_i p_i q_i \in R \]
For the $\omega$-extension of $h_0=\sum_{i=1}^d\alpha_ip_iq_i$ we keep a 
special notation:
\[A_0:=\sum_{i=1}^d(\alpha_i+\omega_i) p_iq_i \in R .\]
\end{definition}
So $A_0$ is obtained from $h_0$ by {\em detuning} the frequencies in the most 
general way. One also may interpret it as a versal deformation of $h_0$. 
Starting from a Hamiltonian 

\[ H=\sum_{i=0}^d \alpha_i p_iq_i+O(3),\]
we first form the $\omega$-extension of $H$:
\begin{align*}
F_0&:=H+\sum_{i=1}^d \omega_ip_iq_i\\
   &=A_0+O(3)\\
   &=A_0+[F_0]_3^4+O(4).
   \end{align*}

When we solve a {\em homological equation} of the form
$$v_0(A_0)=[F_0]_3^4 +t_0,\;\;\; t_0 \in M ,$$ 
we obtain a Poisson derivation $v_0$, which we can 
exponentiate to produce an automorphism  $e^{-v_0}$. The application of $e^{-v_0}$ to $F_0$ produces $F_1$, where this term is  removed; we put $A_1=A_0+t_0$. In this particular case, it turns out that $t=0$ but at the next level we have to solve
$$v_1(A_1)=[F_1]_4^6 + t_1 ,\;\;\; t_1 \in M$$ 
for the degree $4$ and $5$ part of $F_1$ on $A_1$ and, as a general rule $t_1 \neq 0$. Then the application of $e^{-v_1}$ to $F_1$ produces $F_2$, where now these terms of degree $4$ and $5$ are removed, but certain terms in the Moser algebra $M$ are introduced. We add these remaining terms  to $A_1$ and obtain $A_2$. Next we solve the homological equation for the terms of degree $6,7,8,9$ of $F_2$, but now on  $A_2$, etcetera. Thus we obtain, by iteration, a sequence of triples
\[ (F_n, A_n, v_n ), \;\;\;n=0,1,2,\ldots\]

\subsection{Ordering the terms of the expansion}
For convenience of the reader we include the following diagram that
indicates the degrees of the quantities that appear in the iteration.\\
\vskip0.1cm
\[
\begin{array}{|c||c|c|c|c|c|c|c|c|c|c|c|c|c|c|c|c|c|}
\hline
 &2&3&4&5&6&7&8&9&10&11&12&13&14&15&16&17&18\\
\hline
\hline
F_0&\bu&\cro&\bs&\bs&\bs&\bs&\bs&\bs&\bs&\bs&\bs&\bs&\bs&\bs&\bs&\bs&\bs\\
\hline
F_1&\bu&\circ&\cro&\cro&\bs&\bs&\bs&\bs&\bs&\bs&\bs&\bs&\bs&\bs&\bs&\bs&\bs\\
\hline
F_2&\bu&\circ&\bu&\circ&\cro&\cro&\cro&\cro&\bs&\bs&\bs&\bs&\bs&\bs&\bs&\bs&\bs\\
\hline
F_3&\bu&\circ&\bu&\circ&\bu&\circ&\bu&\circ&\cro&\cro&\cro&\cro&\cro&\cro&\cro&\cro&\bs\\
\hline
F_4&\bu&\circ&\bu&\circ&\bu&\circ&\bu&\circ&\bu&\circ&\bu&\circ&\bu&\circ&\bu&\circ&\cro\\
\hline
\end{array}
\]\ \\
\vskip0.2cm
The bullets $\bu$ and circles $\circ$ represent terms of $A_n $.   They belong to the Moser-algebra : the $\bu$ terms are constant in columns, the circles  $\circ$ are zero, as the Moser-algebra only has terms of even degree.\\
So $\bu$ and $\circ$ represent the {\em normal form range}, consisting 
of terms of $F_n$ of degree
\[ 2 \le degree <2^n+2\]
The crosses $\times$ represent the terms of $F_n$ that determine the derivations $v_n$. 
These make up what we call the {\em active range} of degrees: 
\[2^n+2 \le degree <2^{n+1}+2 .\] 
The black squares $\bs$ represent the terms of $F_n$ of degree higher
than $2^{n+1}+2$ that do not directly influence the next iteration step, but of course 
must be carried along.\\

We now rewrite the iteration in a form where this trichotomy in degrees 
is manifest. Consider the decomposition
\[ F_n:=A_n+M_n+U_n=\bu+\times+\bs,\]
where
\[A_n:=[F_n]^{2^n+2},\;\;\; M_n:=[F_n]_{2^n+2}^{2^{n+1}+2},\;\;\; U_n:=[F_n]_{2^{n+1}+2},\]
are the lower, middle and upper parts of $F_n$.

If at each step we can solve the linearised equation for $v_n$ then
the Hamiltonian normal form iteration will produce a sequence $(F_n,A_n,v_n)$: 
the series 
\[ F_0=H+\sum_{i=1}^d \omega_i p_iq_i =A_0+O(3)\]
will be transformed by 
\[\Phi_n:=e^{-v_{n-1}}\dots e^{-v_0} \]
to a series of the form
\[F_n=A_{n}+O(2^{n}+2) .\]
If we let $n$ go to $\infty$, we then obtain a formal Poisson automorphism
\[\Phi_{\infty}:=\ldots e^{-v_n}\dots e^{-v_0} \in Aut(R),\]
and obtain
\[F_\infty:=\Phi_{\infty}(F_0)=A_\infty,\;\;\;A_{\infty} \in A_0+M\]
The automorphism $\Phi_{\infty}$ transforms the perturbation $F_0=A_0+O(3)$
back to the normal form $A_0$, plus terms that have no effect on
the dynamics.

\begin{definition} Let $H=\sum_{i=1}^d \alpha_i p_iq_i+\ldots \in P$.
The {\em $k$-th Hamiltonian normal form of $H$} is the series
\[A_{H,k} :=A_k \;\; \mod I \in \CM[[\omega,\tau]],\]
obtained from $A_k$ by the substitution $p_iq_i=\tau_i$.
The {\em Hamiltonian normal form of $H$} is the series
\[ A:=A_H:=A_{\infty}\;\; \mod I  \in \CM[[\omega,\tau]].\]
\end{definition}     
So our aim is to solve the linearised equation, also called the {\em homological equation}.

\subsection{ The homological equation}
\label{SS::Homequation}
We now describe a specific way to solve the homological equation for the $v_k$.
In the algorithm for the Birkhoff normal form the derivations $v_0, v_1, v_2, \ldots$ were determined by applying them to the fixed element $h_0$, whereas 
here the sequence is determined by applying them to elements $A_0, A_1, A_2,\ldots$ which is determined in the iteration process. The infinitesimal action 
$$ \Theta(R) \to R,\;\; v \mapsto v(A_0) $$
on
\[A_0:=\sum_{i=1}^d(\alpha_i+\omega_i) p_iq_i \in R .\]
takes a simple form in the monomial basis:\\
\begin{align*}
\{ A_0, p^a q^b\}&=(\alpha+\omega,a-b)p^aq^b,\\
\d_{\omega_k} A_0&=p_kq_k,\\
\d_{\tau_k} A_0&=0 .\\
\end{align*}

\begin{definition} 
\label{D::L}
We define a $\CM[[\omega,\tau]]$-linear map 
\[L: R \to \Theta(R)=\Ham(R) \oplus \Der(R_0), m \mapsto Lm \]
by setting for $a \neq b$:
\[ Lp^aq^b :=\{-,\frac{1}{(\alpha+\omega,a-b)}p^aq^b\}.\]
For $a=b$, or more generally for a series 
\[m=g(p_1q_1,p_2q_2\,\ldots,p_dq_d)=g(pq)\]
we set
\[Lm:=\sum_{i=1}^d \frac{\partial g(\tau)}{\partial \tau_i}\d_{\omega_i}.\]
\end{definition}

 
\begin{definition} For $A=A_0+T,\ T \in I^2$ we define a linear map
\[ j_{A}: R \to \Theta(R)\]
in terms of $L$ by the formula
\[j_{A}: m \mapsto Lm-L(Lm(T))=L(m-Lm(T))\]
\end{definition}

\begin{proposition} For any $A=A_0+T$, $T \in I^2$ and  any $m \in R$, we have 
\[j_A(m)(A)=m+t,\;\;t \in R_0+I^2,\]
\end{proposition}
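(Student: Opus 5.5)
The plan is to compute $j_A(m)(A)$ directly. I would first establish the identity for $A_0$ alone, and then show that the correction term $-L(Lm(T))$ built into $j_A$ absorbs the first-order contribution of $T$. Throughout I use that $L$ is $\CM[[\omega,\tau]]$-linear (hence, by continuity in the adic topology, $R_0$-linear). It therefore suffices to check each identity on elements of the form $c\,p^aq^b$ with $c\in R_0$, and then sum formally; the order filtration guarantees convergence.

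\emph{Step 1: $L x(A_0)\in x+R_0+I^2$ for every $x\in R$.} For $a\neq b$, the monomial formulas for the infinitesimal action on $A_0$ give $\{A_0,\frac{1}{(\alpha+\omega,a-b)}p^aq^b\}=p^aq^b$ exactly. For a diagonal series $x=g(pq)$ we have $Lx(A_0)=\sum_i \d_{\tau_i}g(\tau)\,p_iq_i$. Writing $p_iq_i=\tau_i+f_i$ and using the Taylor expansion from the Moser extension, $g(\tau+f)=g(\tau)+\sum_i\d_{\tau_i}g(\tau)f_i \ \mod I^2$, we get
\[Lx(A_0)=x-g(\tau)+\sum_i\tau_i\d_{\tau_i}g(\tau)\ \mod I^2 .\]
This lies in $x+R_0+I^2$.

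\emph{Step 2: algebraic reduction.} Put $y:=Lm(T)$ and $m':=m-y$, so that $j_A(m)=Lm'=Lm-Ly$. Then
\[j_A(m)(A)=Lm'(A_0)+Lm(T)-Ly(T)=m'+t_1+y-Ly(T)=m+t_1-Ly(T),\]
with $t_1\in R_0+I^2$ by Step 1. It remains to show $Ly(T)\in I^2$. Observe first that $y\in I$. Indeed, $Lm$ has no $\d_\tau$-component, $\d_{\omega_k}f_i=0$, and a derivation maps $I^2$ into $I$; so $v(f_if_j)=v(f_i)f_j+f_iv(f_j)\in I$.

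\emph{Step 3 (main point): $L(I)$ maps $I^2$ into $I^2$.} I write $y=\sum_i f_iy_i$ and split each $y_i$ into off-diagonal monomials and a diagonal series. This is the step I expect to need care: $L$ is not $R$-linear, so I must verify that $L$ behaves well on each type of product $f_i\cdot(\text{monomial})$.
\begin{enumerate}[{\rm 1)}]
\item \emph{Off-diagonal monomials, $a\neq b$.} Here $f_ip^aq^b=p^{a+e_i}q^{b+e_i}-\tau_ip^aq^b$, and both monomials have the same divisor $(\alpha+\omega,a-b)$. Hence $L(f_ip^aq^b)=\{-,g\}$ with $g=f_ip^aq^b/(\alpha+\omega,a-b)\in I$. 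For $g=f_kh$ we have $\{f_j,g\}=f_k\{f_j,h\}\in I$, because $\{f_j,f_k\}=0$. Consequently $\{f_if_j,g\}=f_i\{f_j,g\}+f_j\{f_i,g\}\in I^2$.
\item \emph{Diagonal part $f_ig(pq)$.} This is $G(pq)$ with $G(s)=(s_i-\tau_i)g(s)$, where $\tau$ is treated as a parameter (by $\CM[[\tau]]$-linearity of $L$). Since $\d_sG|_{s=\tau}=g(\tau)e_i$, we get $L(f_ig(pq))=g(\tau)\d_{\omega_i}$. Because $\d_{\omega_i}f_j=0$, this derivation preserves $I^2$.
\end{enumerate}

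Summing over all terms gives $Ly(T)\in I^2$. Combined with Step 2, this yields $j_A(m)(A)=m+t$ with $t\in R_0+I^2$.
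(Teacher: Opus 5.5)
Your proof is correct and follows the paper's own argument. You check the claim for $A_0$ on off-diagonal monomials and on diagonal series via $g(\tau+f)\equiv g(\tau)+\sum_i g_i(\tau)f_i \bmod I^2$, and you reduce the general case to $L(Lm(T))(T)\in I^2$. You then prove that $L$ of an element of $I$ preserves $I^2$ by splitting it into an off-diagonal piece, which is Hamiltonian with generator in $I$, and a diagonal piece, which is a combination of the $\partial_{\omega_i}$. Along the way you make explicit two details the paper leaves implicit: $p^{a+e_i}q^{b+e_i}$ and $\tau_ip^aq^b$ share the divisor $(\alpha+\omega,a-b)$, and $L(f_ig(pq))=g(\tau)\partial_{\omega_i}$.
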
 
\begin{proof}
First, for $A=A_0$ we have $j_{A_0}=L$.
For $m=p^aq^b$ with $a \neq b$ we have 
\[j_{A_0}(m)(A_0)=\{A_0,\frac{1}{(\alpha+\omega,a-b)}p^aq^b\}=p^aq^b=m\]
and for $m=g(pq)$ we have, with $g_i=\partial_{\tau_i} g$,
\begin{align*}
j_{A_0}(m)(A_0)&=\sum_{i=1}^d g_i(\tau) \frac{\partial A_0}{\partial \omega_i}=\sum_{i=1}^n g_i(\tau)p_iq_i\\
          &=\sum_{i=1}^d g_i(\tau) f_i \ \mod R_0=g(pq)\ \mod R_0+I^2,
\end{align*}
where we used the Taylor expansion
\[g(pq)=g(\tau+f)=g(\tau)+\sum_{i=1}^d g_i(\tau) f_i\ \mod I^2.\]
This shows the correctness for $T=0$. For the general case $A=A_0+T$, we get

\begin{align*}
 j_A(m)(A_0+T)&=Lm(A_0)+Lm(T)-L(Lm(T))A_0-L(Lm(T))(T) \\
              &=m+Lm(T)-Lm(T)-L(Lm(T))(T)\ \mod R_0+I^2\\
              &=m-L(Lm(T))(T)\ \mod R_0+I^2.\\
 \end{align*}
Because $T \in I^2$, it follows that $Lm(T) \in I$. Furthermore,
for any $g \in I$, we have $Lg(T) \in I^2$. This can be seen by writing
$g$ as $\CM[[\omega,\tau]]$-linear combination of terms of the
form $p^aq^b f_i$. If $a \neq b$, $\{T,p^aq^bf_i\} \in I^2$, whereas for
$a=b$, we obtain a combination of terms $\partial_{\omega_i}T$, which is in
$I^2$, as the generators $f_i=p_iq_i-\tau_i$ are independent of $\omega_i$. 
\end{proof}

\subsection{The HNF (Hamiltonian Normal Form) iteration}
With a Hamiltonian $H=\sum_{i=1}^d \alpha_ip_iq_i+O(3) \in P$ as input,
we begin the iteration with the {\em initialisation step} 
\begin{align*}
F_0&=H+\sum_{i=1}^d \omega_ip_iq_i=A_0+O(3)\\
A_0&=\sum_{i=1}^d(\alpha_i+\omega_i)p_iq_i\\
v_0&=j_{A_0}\left( [F_0]_3^4\right).\\
\end{align*}

The next terms are determined by the {\em iteration step}: from $F_n, A_n$ we then obtain
\begin{align*}
F_{n+1}&= e^{-v_{n}}F_{n},\\
A_{n+1}&=A_{n}+\left[F_{n}-v_n(F_n)\right]_{2^{n}+2}^{2^{n+1}+2},\\
v_{n+1}&=j_{A_{n+1}}(\left[F_{n+1}\right]_{2^{n+1}+2}^{2^{n+2}+2})
\end{align*}
It is useful to define the {\em increments}
\[S_{n+1}:=\left[F_{n}-v_n(F_n)\right]_{2^{n}+2}^{2^{n+1}+2},\]
so that:
\[A_{n+1}=A_n+S_{n+1} .\]
There are a few simple but important points to notice:

\begin{proposition}
 \begin{enumerate}[{\rm i)}]
\item The derivation $v_n$ has order $2^n$, i.e. $v_{n}=[v_{n}]_{2^n}$.
\item $F_{n} =A_{n}+O(2^{n}+2)$.
\item $S_n \in M$.
\end{enumerate}
\end{proposition}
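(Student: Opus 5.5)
We prove the three statements together by induction on $n$. Throughout we use that $L$ does not increase order: from Definition~\ref{D::L}, if $m=O(k)$ then $Lm$ has order $\ge k-2$ as a derivation. For a monomial $p^aq^b$ with $a\neq b$ this is just the order of a Hamiltonian vector field. For $m=g(pq)$ of degree $k$, the vector field $\sum_i \d_{\tau_i}g\,\d_{\omega_i}$ has coefficients of weight $k-2$ (since $\tau_i$ has weight $2$), and $\omega$ carries weight $0$. Consequently, if $T$ has order $\ge 4$ (it lies in $I^2$, and $I$ is generated in degree $2$), then $j_A(m)=L(m-Lm(T))$ still has order $\ge k-2$.

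\emph{Base case.} For $n=0$, statement ii) holds by construction, $F_0=A_0+O(3)=A_0+O(2^0+2)$. Since $[F_0]_3^4$ has degree $3$, we get $v_0=j_{A_0}([F_0]_3^4)$ of order $\ge 1=2^0$, which is i).

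\emph{Inductive step.} Assume i) and ii) hold at stage $n$, with $A_n\in A_0+M$. Note that $M\subset R_0+I^2$ and the $R_0$-part does not affect $j$ or the Poisson action except through $\d_\omega$ terms. Let $m_n=[F_n]_{2^n+2}^{2^{n+1}+2}$ be the middle part, so $v_n=j_{A_n}(m_n)$ is a sum of homogeneous pieces of order between $2^n$ and $2^{n+1}-1$; this gives i). Now expand
$$F_{n+1}=e^{-v_n}F_n=F_n-v_n(F_n)+\tfrac12 v_n^2(F_n)-\dots.$$
Since $F_n=O(2)$ and $v_n$ has order $2^n$, the term $v_n^k(F_n)$ is $O(k2^n+2)$. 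For $k\ge2$ this is $O(2^{n+1}+2)$. Hence
$$[F_{n+1}]^{2^{n+1}+2}=[F_n-v_n(F_n)]^{2^{n+1}+2}.$$
Below degree $2^n+2$ this equals $A_n$, because $v_n(F_n)=O(2^n+2)$. In the active range $[2^n+2,2^{n+1}+2)$ it equals $S_{n+1}$. Therefore $F_{n+1}=A_n+S_{n+1}+O(2^{n+1}+2)=A_{n+1}+O(2^{n+1}+2)$, which is ii).

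\emph{Statement iii).} It remains to show $S_{n+1}\in M$, which is the main point. In the active range we have $v_n(F_n)\equiv v_n(A_n)$, because $v_n(F_n-A_n)=v_n(O(2^n+2))=O(2^{n+1}+2)$. By the Proposition on $j_A$, applied with $T=A_n-A_0$ (its non-central part lies in $I^2$), $v_n(A_n)=m_n+t$ with $t\in R_0+I^2$. Truncating to the active range, $S_{n+1}=[-t]_{2^n+2}^{2^{n+1}+2}$ lies in $R_0+I^2$, since truncation by degree respects both $R_0$ and $I^2$ (the $f_i$ are homogeneous).

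To get the sharper statement $S_{n+1}\in M=R_0+I^2\cap R_0[[f]]$, we must check that $t$ involves only $\tau$ and $f$, and no off-diagonal monomials $p^aq^b$ with $a\neq b$. This is the step I expect to be the main obstacle. The plan is to use weights under the torus action $p^aq^b\mapsto$ character $a-b$:
\begin{itemize}
\item $A_n$ has character $0$, being in $R_0[[f]]$ by induction.
\item $L$ preserves characters.
\item $\{A_n,\cdot\}$ and $\d_\omega$ preserve the character-$0$ subspace.
\end{itemize}
Hence the off-diagonal part of $m_n$ is reproduced exactly, and the error $t$ has character $0$. Since the character-$0$ part of $R$ is precisely $R_0[[pq]]=R_0[[f]]$, we conclude $t\in(R_0+I^2)\cap R_0[[f]]$, and therefore $S_{n+1}\in M$.
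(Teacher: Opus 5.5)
\textbf{Verdict.} Your arguments for (i) and (ii) are the paper's: order counting plus the expansion of $e^{-v_n}$. One small slip: $v_n$ is not of order $<2^{n+1}$, because the correction $-L(Lm_n(T))$ involves the whole series $T$. You only use the lower bound, so this is harmless. Your reduction of (iii) to $t\in R_0+I^2$ is also fine. The gap is the character argument, which fails.

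\textbf{Why the character argument fails.} Equivariance of $L$ and the fact that $A_n$ has character $0$ only show that the character-$\chi$ part of $t$ is determined by the character-$\chi$ part of $m_n$. They do not show that it vanishes, and in general it does not. Write $T=A_n-A_0\in R_0[[f]]$ as a series in $\tau,f$. Since $\{f_i,h\}=\chi_i h$, you get $\{T,h\}=(\nabla_f T,\chi)\,h$ for every $h$ of character $\chi$. Now take $m$ of character $\chi\neq 0$ and set $\epsilon:=(\nabla_f T,\chi)/(\alpha+\omega,\chi)$. Then $j_{A_n}(m)=Lm-L(Lm(T))$ is the Hamiltonian field of $h=(1-\epsilon)\,m/(\alpha+\omega,\chi)$, and
\[ j_{A_n}(m)(A_n)=(\alpha+\omega+\nabla_f T,\chi)\,h=(1+\epsilon)(1-\epsilon)\,m=m-\epsilon^2 m . \]
So the off-diagonal part is reproduced only to first order in $T$. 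For instance, take $d=1$, $T=cf^2$ and $m=p^aq^b$ with $a\neq b$. The error is $-\frac{4c^2}{(\alpha+\omega)^2}f^2p^aq^b$. This lies in $I^2$ but has character $a-b\neq 0$, so it is not in $M$.

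\textbf{Consequence.} This error has degree $\deg m+4$. It falls in the active range $[2^n+2,2^{n+1}+2)$ as soon as $n\ge 3$ and $m_n$ has an off-diagonal term of degree $\le 2^{n+1}-3$. In the running example, $A_2$ gives $c=-3/(1+\omega)$. Hence $S_4$ contains $\frac{36}{(1+\omega)^4}f^2 m'$, where $m'$ is the degree-$10$, character-$6$ part of $F_3$. This is nonzero for generic $H$: adding $\lambda p^8q^2$ to $H$ shifts $m'$ by $\lambda p^8q^2$ and changes neither $v_0,v_1,v_2$ nor $A_3$. So with $j_A$ as literally defined, (iii) already fails for $S_4$, and your last step cannot be closed. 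The paper's proof does not handle this either. It simply asserts $t\in M$, while the Proposition on $j_A$ only yields $t\in R_0+I^2$.

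\textbf{How to repair it.} Solve the off-diagonal part of the homological equation exactly rather than to first order. Since $A_n\in R_0[[f]]$ and $(\nabla_f A_n,\chi)=(\alpha+\omega,\chi)+O(2)$ is a unit of $R$, put $h_\chi:=m_n^{(\chi)}/(\nabla_f A_n,\chi)$, which gives $\{A_n,h_\chi\}=m_n^{(\chi)}$ on the nose. Take $v_n:=\sum_{\chi\neq 0}\{-,h_\chi\}+L(m_n^{(0)})$, where $m_n^{(0)}=g(pq)$. Then
\[ v_n(A_n)-m_n=\sum_i g_i(\tau)\,\bigl(p_iq_i+\partial_{\omega_i}T\bigr)-m_n^{(0)} . \]
This has character zero and lies in $R_0+I^2$; note that $\partial_{\omega_i}$ preserves $M$ because the $f_i$ do not involve $\omega$. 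Hence it lies in $M$. The orders are unchanged, so (i) and (ii) survive, and your character argument then goes through.
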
 
\begin{proof}
  
i) From the recursive definition we see that $v_n$ is obtained by
solving the homological equation with the terms of degrees
${2^n+2}$ up to ${2^{n+1}+2}$ from $F_n$. Taking the Poisson bracket
with a term of degree $2^n+2$ shifts degrees by $2^n$, and similarly for the
non-exact part of $v_n$. So indeed $v_n$ has order $2^n$.\\ 

ii) This follows from an easy induction on $n$. By definition, the statement 
holds for $n=0$. Let us assume that
 $$F_n=A_{n}+O(2^{n}+2)$$ 
From the definition of $F_{n+1}$ we have 
\[F_{n+1}=e^{-v_n}F_n=F_n-v_n(F_n)+\frac{1}{2}v_n^2(F_n)-\ldots \]
and as $v_n$ has order $2^n$, it follows that
\[v_n^2(F_n)=O(2+2^n+2^n)=O(2^{n+1}+2).\]
So we have
\[ F_{n+1}=A_{n}+[F_n-v_n(F_n)]_{2^n+2}^{2^{n+1}+2}+O(2^{n+1}+2)=A_{n+1}+O(2^{n+1}+2).\]
iii) We use induction on $n$ and assume that $S_{n} \in M$. 
From (ii) we have
\[F_n =A_{n}+O(2^{n}+2).\]
The derivation $v_n$ is constructed
to solve the homological equation up to terms of high order:  
\[v_n(A_{n})=[F_n]_{2^n+2}^{2^{n+1}+2}+t+O(2^{n+1}+2),\;\;\; t \in M\]
As we have
\[v_n(F_n)=v_n(A_{n}+O(2^{n}+2))=v_n(A_{n})+O(2^{n+1}+2),\]
we see that the increment
\[[F_n-v_n(F_n)]_{2^n+2}^{2^{n+1}+2} \in  M,\]
hence also $S_{n+1} \in M$.
\end{proof}
Let us denote by $B_n$ the sum of the middle and upper terms, so that
$$F_n=A_n+B_n. $$
As $v_n$ is of order $2^n$, we have
$$ \left[F_{n}-v_n(F_n)\right]_{2^{n}+2}^{2^{n+1}+2}=\left[B_{n}-v_n(A_n)\right]_{2^{n}+2}^{2^{n+1}+2}.$$
Now write
$\tau_n=\left[ - \right]_{2^{n}+2}^{2^{n+1}+2},\ \s_n=\left[ - \right]_{2^{n+1}+2} $,
The iteration is defined by:
\begin{align*}
B_{n+1}&= \s_n(e^{-v_{n}}F_{n}),\\
A_{n+1}&=A_{n}+\tau_n(B_n-v_n(A_n)),\\
v_{n+1}&=j_{A_{n+1}}(\tau_{n+1}(B_{n+1}))
\end{align*}
and is obtained by iterating the maps:\\
\begin{center}
 
\begin{tabular}{| c |}
\hline
\ \\

$\phi_n:(A,B,v) \mapsto (A,0,0)+f_n(A,B,v) $\\
\ \\
$f_n(A,B,v)=\left(\tau_n(B-v(A)), \s_n(e^{-v}(A+B)),j_{A+\tau_n(B-v(A))} \circ \tau_{n+1}(e^{-v}(A+B))\right) $\\
\ \\
\hline
\end{tabular}

\end{center}
\vskip0.5cm
This iteration converges in the $\Mt$-adic topology therefore we get a versal deformation theorem in the Hamiltonian context:
\begin{theorem}Assume that $F=F_0=\sum_{i=1}^d (\alpha_i+\omega_i)p_iq_i+O(3) \in R=SD_\a[[\tau,q,p]]$ is such that $\a$ is non resonant then there
exists a Poisson automorphism $\Phi \in \Aut(R)$ such that
$$\Phi(F) =\sum_{i=1}^d (\a_i+\omega_i)p_iq_i+T,\ T\in M$$
\end{theorem}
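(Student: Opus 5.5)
We prove the theorem by running the HNF iteration and showing that it converges in the $\Mt$-adic topology, where $\Mt$ is the ideal generated by $\tau,q,p$ with the weights $2,1,1$. Since $F$ lies in $R$ rather than in $P$, we cannot quote the initialisation literally. Instead we set $F_0=F$ and $A_0=\sum_i(\alpha_i+\omega_i)p_iq_i$, so that $F_0=A_0+O(3)$. We then define $(F_n,A_n,v_n)$ by the iteration step: $v_n=j_{A_n}([F_n]_{2^n+2}^{2^{n+1}+2})$, $F_{n+1}=e^{-v_n}F_n$, and $A_{n+1}=A_n+S_{n+1}$.

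The first thing to check is that every object stays in $R=SD_\a[[\tau,q,p]]$. The map $L$ only divides by $(\alpha+\omega,a-b)$ with $a\neq b$, and such a factor is invertible in $SD_\a$ because $a-b\in\ZM^d\setminus\{0\}$. The non-exact part $\sum_i \partial_{\tau_i}g\,\d_{\omega_i}$ has coefficients in $R_0$. Since $SD_\a$ is closed under $\d_{\omega_i}$, every such derivation preserves $R$. Hence $j_{A_n}$ maps $R$ into $\Theta(R)$, provided $A_n=A_0+T_n$ with $T_n\in I^2$, which is what the definition of $j_A$ requires. This is one point needing care: $S_n\in M=R_0+I^2\cap R_0[[f]]$ also has an $R_0$-component. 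That component is Poisson central, so it is killed by every Hamiltonian derivation. One can check that it is also harmless for the non-exact part in the proof of the proposition on $j_A$: $\d_{\omega}$ or $\d_\tau$ applied to an element of $R_0$ stays in $R_0$. So we can write $A_n=A_0+c_n+T_n$ with $c_n\in R_0$, $T_n\in I^2$, and define $j_{A_n}:=j_{A_0+T_n}$.

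The homological-equation proposition gives $v_n(A_n)=m+t$ with $t\in R_0+I^2$. Parts (i)--(iii) of the proposition on the iteration then give three facts: $v_n$ has order $2^n$, $F_n=A_n+O(2^n+2)$, and $S_{n+1}\in M$. Here we need to make sure that the remainder $t$ actually lands in the Moser algebra $M$, that is, in $R_0+I^2\cap R_0[[f]]$, and not merely in $R_0+I^2$. To see this, we rewrite any element of $I^2$ as a series in the $f_i$ and in monomials $p^aq^b$ with $a\ne b$. Monomials with $a\neq b$ can be absorbed into the next homological step, so the part of $I^2$ left behind really lies in $R_0[[f]]$. I expect this bookkeeping, which decides which pieces of $I^2$ are put into $S_{n+1}$ and which into the next middle part, to be the main obstacle. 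It amounts to choosing a splitting $R=\mathrm{Im}(v\mapsto v(A_n))\oplus M$ that is compatible with the degree filtration.

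For convergence, $v_n$ has order $2^n\to\infty$. So $e^{-v_n}\equiv \mathrm{Id}$ modulo $\Mt^{k+2^n}$ on $\Mt^k$. This makes the partial compositions $\Phi_n=e^{-v_{n-1}}\cdots e^{-v_0}$ Cauchy in the $\Mt$-adic topology, and the limit $\Phi\in\Aut(R)$ exists and is Poisson. Each $e^{-v_n}$ is a Poisson automorphism, as in the exponentiation lemma, because $v_n$ is a Poisson derivation of positive order. Similarly, $A_n$ stabilises degree by degree, since $S_{n+1}$ has degrees $\geq 2^n+2$, and it converges to $A_\infty\in A_0+M$. Finally, $\Phi(F)-A_\infty\equiv F_n-A_n=O(2^n+2)$ for every $n$, so $\Phi(F)=A_\infty=A_0+T$ with $T=\sum_n S_n\in M$. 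Here we use that $M$ is closed in the $\Mt$-adic topology, because it is defined degreewise.
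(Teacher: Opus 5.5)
You follow the paper's route: run the HNF iteration, invoke the proposition on $j_A$ and facts (i)--(iii), and conclude by $\mathcal{M}$-adic convergence. Your additional checks are correct: coefficients stay in $SD_\alpha$, the $R_0$-component $c_n$ of $A_n$ is harmless, $\Phi_n$ and $A_n$ converge, and $M$ is closed. You have also found the one delicate point. The proposition on $j_A$ gives only $t\in R_0+I^2$, while (iii) and the statement need $t\in M$; the paper's proof of (iii) simply writes $t\in M$ there. Your proposed remedy does not work as described, so this step is a genuine gap.

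To see why, write $K(u):=L(u)(T_n)$. For every $u\in R$ one has $L(u)(A_0)-u\in M$. On off-diagonal monomials it is $0$. For $u=c(\omega,\tau)(pq)^a$ it equals $c\,(|a|-1)\tau^a$ minus $c$ times the part of $(\tau+f)^a$ of order $\ge 2$ in $f$. Also $L(u)(c_n)\in R_0$. Hence $L(u)(A_n)\equiv u+K(u)$ modulo $M$. Since $j_{A_n}(m)=L(m-K(m))$, the actual remainder is $t\equiv -K^2(m)$ modulo $M$.

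This remainder lies in $I^2$ but in general not in $M$. For example, with $T_n=f_1^2$ and $m=p^aq^b$, $a_1\neq b_1$, one gets $K^2(m)=4(a_1-b_1)^2(\alpha+\omega,a-b)^{-2}f_1^2p^aq^b$, which is off-diagonal. Moreover, $T_n=O(4)$ and $L(u)$ has order $\deg u-2$, so $K$ raises degree by at least $2$. Thus $K^2(m)$ starts in degree $2^n+6$, which for $n\ge 3$ lies inside the current window $[2^n+2,2^{n+1}+2)$, not in the next middle part. If you leave these terms in $F_{n+1}$ rather than in $S_{n+1}$, then (ii) fails, and $v_{n+1}=j_{A_{n+1}}([F_{n+1}]_{2^{n+1}+2}^{2^{n+2}+2})$ never sees them.

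The missing idea is to iterate the correction already built into $j_A$ instead of truncating it after one step. Put $u:=\sum_{k\ge 0}(-K)^k m$ and $v_n:=L(u)$. The series converges $\mathcal{M}$-adically, and only finitely many terms matter below degree $2^{n+1}+2$. Then $v_n$ is still a Poisson derivation of order $2^n$, and $v_n(A_n)\equiv u+K(u)=m$ modulo $M$, so the remainder really lies in $M$. The paper's $j_A$ is the truncation $k\le 1$.

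This is exactly the degreewise surjectivity of $v\mapsto v(A_n)$ onto $R/M$ that your ``splitting'' asks for; no direct sum is needed. With this solver the degree windows are unchanged, (iii) holds because $M$ is stable under degree truncation, and the rest of your argument goes through as written. Without it, the iteration (yours or the paper's) only yields $\Phi(F)=A_0+T$ with $T\in R_0+I^2$. That is dynamically sufficient but weaker than the stated $T\in M$.
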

 
 \subsection{Relation to the Birkhoff normal form}
\label{SS::relation}
In the Hamiltonian normal form iteration, the original Hamiltonian $H(p,q)$ 
is obtained from $F=F_0$ by equating to zero the functions $\omega_i$:
\[F_0(\omega=0,\tau,p,q)=H(p,q).\]
The automorphism $\Phi \in Aut(SD_\a[[\tau,q,p]])$ which maps $F$ to its normal form maps the function $\omega_i$ 
to a function $G_i(\omega,\tau)$. This function has an expansion similar to that of the Poincar\'e example:
$$G_i(\omega,\tau)=\Phi(\omega_i)=\omega_i +\sum_{I,J} \frac{a^i_{I,J} \tau^I}{(\a+\omega,J)}$$
with poles along the resonance hyperplanes.

By the implicit function theorem, we may solve $G_i=0$ in terms of $\omega_i$ and write $\omega_i=\omega_i(\tau)$ which is the Birkhoff series of the generating 
function $G_i$, according to our terminology. This induces a morphism of Poisson algebras:
$$SD_\a[[\tau,q,p]] \stackrel{\pi}{\to}  \CM[[\tau,q,p]],\ f(\omega,\tau,q,p) \mapsto f(\omega(\tau),\tau,q,p)$$
 
There is a second morphism of Poisson algebras
$$s:\CM[[\tau,q,p]] \to \CM[[q,p]],\  f(\tau,q,p) \mapsto f(qp,q,p)$$
where we substitute $\tau_i$ by $q_ip_i$. Moreover as $ \CM[[q,p]]$ is a Poisson subalgebra of our initial Poisson algebra $SD_\a[[\tau,q,p]]$
this means that the Poisson morphism $\Phi$ induces a symplectic automorphism of $\CM[[q,p]]$. It is defined by substituting $\omega_j$
by $\omega_j(\tau)$ in $\Phi(q_i)$ and $\Phi(p_i)$ and then equating $\tau_j$ with $p_jq_j$. The Birkhoff normal form being unique, we deduce that the image
of the Hamiltonian normal form under the composed map
$$SD_\a[[\tau,q,p]] \stackrel{\Phi}{\to} SD_\a[[\tau,q,p]]  \stackrel{\pi}{\to}  \CM[[\tau,q,p]]  \stackrel{s}{\to}  \CM[[q,p]] $$
is the Birkhoff normal form.
\subsection{A simple example}
\label{SS:Freq_manifold}
Let us go back to our $d=1$ example:
$$H(q,p)=pq+p^3+q^3 $$ 
and the ideal $I=(f)$ with $f=pq-\tau$.  
The iteration produces
$$\begin{array}{ l  l  } 
 A_{H,0}&=(1+\omega)\tau\\
 A_{H,1}&= (1+\omega)\tau\\
   e^{-v_0} \omega &=\omega\\ 
 A_{H,2}&=(1+\omega)\tau+3\frac{\tau^2}{1+\omega}\\
   e^{-v_1} e^{-v_0} \omega&=\omega+\frac{6\tau}{(1+\omega)}+o(2)
\end{array}$$
The generating function $G$ is of the form
$$G(\omega,\tau)= \omega+\frac{6\tau}{(1+\omega)}+o(2)$$
We truncate the generating function $G$ at order $2$:
$$ g_2(\omega,\tau)=\omega+\frac{6\tau}{(1+\omega)}$$
The closure of the curve $\{g_2=0\}$ defines  a parabola:
$$\{ (\tau,\omega):\omega^2+\omega+6\tau=0 \}  $$
\begin{figure}[htb!]
\includegraphics[width=0.4\linewidth]{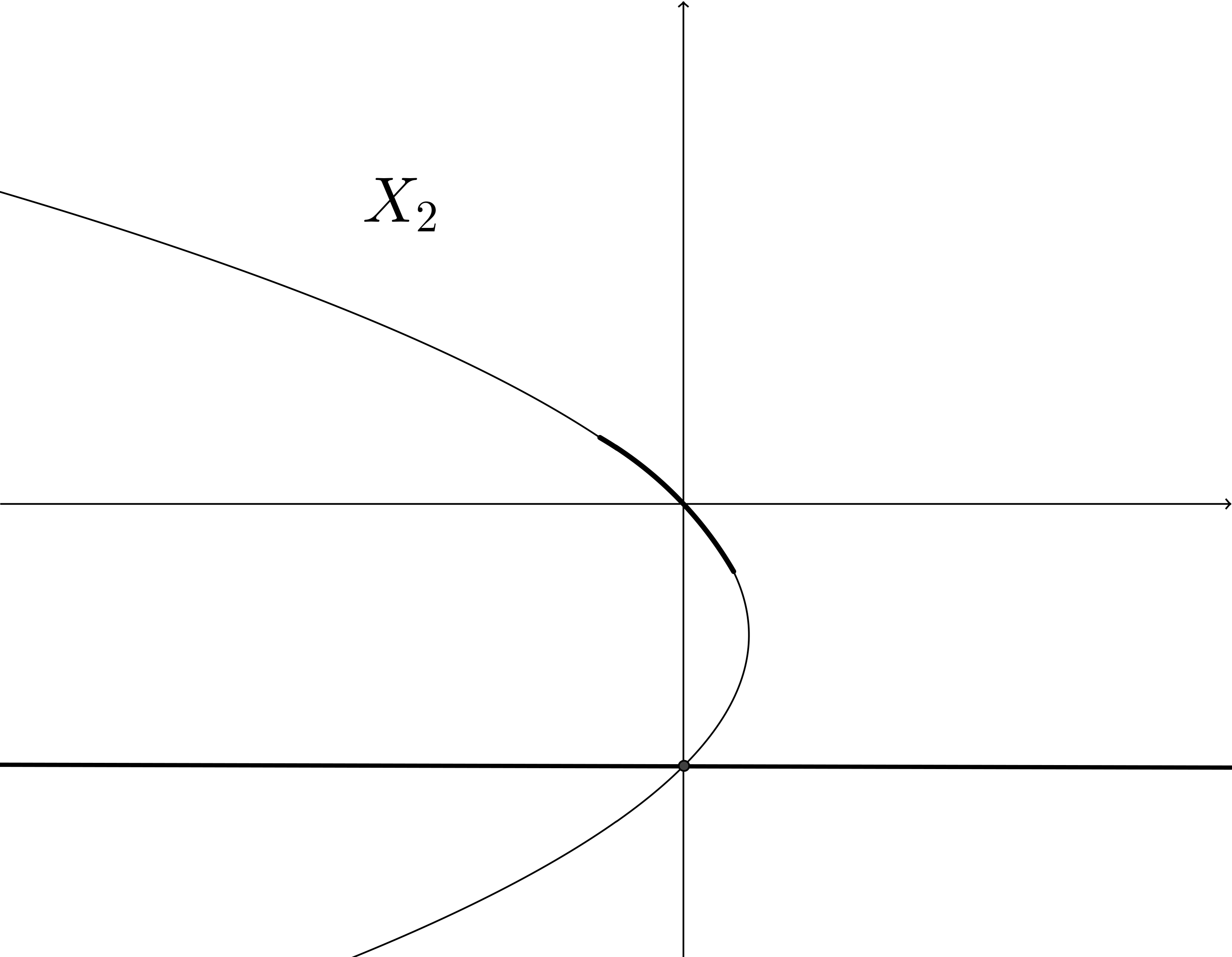} 
\end{figure}
\ \\ 
We now look at the germ of the parabola at the origin, that is, we solve the equation $g_2=0$. This gives the first-order frequency of motion
$$\omega_2(\tau)=-6\tau+O(4) $$ 
Substituting $ \omega_2(\tau)$ into  $F_2$ and taking the constant term
by putting $pq=\tau$, we obtain the first two terms of the Birkhoff normal form:
$$ A_{H,2}(\omega_2(\tau),\tau)=\tau-3\tau^2+O(6)$$

Going to the next order one finds:\\
\[\omega_3(\tau)=-6\tau-36\tau^2-420\tau^3+O(8)\]
\[ A_{H,3}(\omega_3(\tau),\tau)=\tau-3\tau^2-12\tau^3-105\tau^4+O(10),\]
which reproduces the first four terms of the Birkhoff normal form. However the Birkhoff normal form does not see that the curve bends back to the resonance. The functions $H_2, H_3,\ldots$ can be seen as the germs at the origin of $F_2,F_3,\ldots$ restricted to these curves. In this example, the only resonance is at $\omega=-1$ but in higher dimensions resonances have  accumulation points.
 
\bibliographystyle{amsplain}
\bibliography{master}
 \end{document}